\newtheorem{thm}{Theorem}[section]
\newtheorem{lemma}[thm]{Lemma}
\newtheorem{proposition}[thm]{Proposition}
\newtheorem{claim}[thm]{Claim}
\theoremstyle{definition}
\newtheorem{remark}[thm]{Remark}
\newcommand{\pr}{\mathbb{P}}
\newcommand{\N}{\mathbb{N}}
\newcommand{\Z}{\mathbb{Z}}
\newcommand{\R}{\mathbb{R}}
\newcommand{\C}{\mathbb{C}}
\newcommand{\F}{\mathbb{F}}
\newcommand{\NC}{\operatorname{N}_1}
\newcommand{\NE}{\operatorname{NE}}
\newcommand{\Bl}{\operatorname{Bl}}
\newcommand{\Br}{\operatorname{Br}}
\newcommand{\Exc}{\operatorname{Exc}}
\newcommand{\Pic}{\operatorname{Pic}}
\newcommand{\Ext}{\operatorname{Ext}}
\newcommand{\sm}{\operatorname{sm}}
\newcommand{\cont}{\operatorname{cont}}
\newcommand{\sO}{\mathcal{O}}
\newcommand{\sN}{\mathcal{N}}
\newcommand{\sE}{\mathcal{E}}
\title{Fano manifolds having $(n-1,0)$-type extremal rays with large Picard number}
\author{Kento Fujita}
\begin{document}
\maketitle
\begin{abstract}
{\noindent We classify smooth Fano manifolds $X$ 
with the Picard number $\rho_X\geq 3$ such that there exists an extremal ray 
which has a birational contraction that maps a divisor to a point.}
\end{abstract}

\section{Introduction}
Let $X$ be a (smooth) Fano manifold with $\dim X\geq 3$. 
It is very powerful tool to classify and evaluate those $X$ that we see 
\emph{extremal rays} of $X$. In fact, Mori and Mukai succeeded in classifying 
Fano threefolds by viewing extremal rays in detail \cite{MoMu}. 
It is very difficult to consider higher dimensional Fano manifolds in general. 
However, if there exists a ``special" extremal ray, then we can get 
various information about $X$. 
For example, Bonavero, Campana and Wi\'sniewski classified in \cite{bcw} 
that the Fano manifold $X$ 
which has a extremal ray which induces the blowing up of a smooth variety 
along a point. 

Recently, Tsukioka and Casagrande (see \cite{tsukioka,casagrande}) showed that if 
there exists an extremal ray which has a birational contraction 
that maps a divisor to a point, then the Picard number $\rho_X$ of $X$ is at most three. 
This can be seen a kind of generalization of the result of \cite{bcw}. 

Our main result is to classify those $X$ with maximal Picard number. 

\begin{thm}[Main Theorem]\label{mainthm}
Let $X$ be a smooth projective variety of dimension $n\geq 3$. 
Then the following are equivalent:
\begin{enumerate}
\renewcommand{\theenumi}{\arabic{enumi}}
\renewcommand{\labelenumi}{$(\theenumi)$}
\item\label{mainthm1}
$X$ is a Fano manifold such that $\rho_X\geq 3$ and there exists an extremal ray $R\subset\NE(X)$ of type $(n-1,0)$,
\item\label{mainthm2}
$X\simeq\Bl_WY$ such that the following holds:
\begin{enumerate}
\renewcommand{\theenumii}{\alph{enumii}}
\renewcommand{\labelenumii}{\rm{(\theenumii)}}
\item\label{mainthm2a}
$\pi\colon Y=\pr_Z(\sO_Z\oplus\sO_Z(s))\rightarrow Z,$
where $Z$ is an $(n-1)$-dimensional Fano manifold of $\rho_Z=1$ 
with index $r$ such that 
the ample generator of $\Pic(Z)$ is $\sO_Z(1)$. 
\item\label{mainthm2b}
The inequality $r>s>0$ holds.
\item\label{mainthm2c}
$D\subset Y$ is a section of $\pi$ with $\sN_{D/Y}\simeq\sO_Z(s)$ 
and $W\subset D$ is a smooth divisor with $W\in|\sO_Z(d)|$.
\item\label{mainthm2d}
The inequality $r>d-s$ holds. 
\end{enumerate}
\end{enumerate}
\end{thm}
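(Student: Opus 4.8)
The statement is an equivalence, so the plan is to prove the two implications separately. The direction $(2)\Rightarrow(1)$ is constructive and amounts to a controlled intersection computation on an explicit blow-up, whereas $(1)\Rightarrow(2)$ is the genuine classification and is where essentially all the difficulty lies. Throughout I would use freely that, by the theorem of Tsukioka and Casagrande quoted above, the hypothesis $\rho_X\geq 3$ in $(1)$ in fact forces $\rho_X=3$.

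For $(2)\Rightarrow(1)$ I would first record $\rho_X=3$: since $\rho_Z=1$ the $\pr^1$-bundle $\pi\colon Y\to Z$ has $\rho_Y=2$, and blowing up the smooth codimension-two center $W$ raises this by one. Write $b\colon X\to Y$ for the blow-up, $\tilde E$ for its exceptional divisor, $\xi$ for the relative tautological class on $Y=\pr_Z(\sO_Z\oplus\sO_Z(s))$ (so $\xi|_D=\sO_Z(s)$ and $\xi|_{D_0}=\sO_Z$, where $D_0$ is the section complementary to $D$, with $\sN_{D_0/Y}\simeq\sO_Z(-s)$), and $\tilde D,\tilde D_0$ for the strict transforms of $D,D_0$. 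Using $-K_Y=2\xi+\pi^{*}\sO_Z(r-s)$ and $K_X=b^{*}K_Y+\tilde E$, I would test $-K_X$ against the generators of $\NE(X)$: on a line $\ell$ in a fibre of $\tilde E$ one gets $-K_X\cdot\ell=1$; on a minimal curve $c$ of $Z$ sitting in $\tilde D_0$ (disjoint from $\tilde E$) one gets a positive multiple of $r-s$; and on the strict transform $\gamma'\subset\tilde D$ of a minimal curve $\gamma$ of $Z\cong D$, which meets $\tilde E$ in $W\cdot\gamma$ points, one gets a positive multiple of $(r+s)-d$. Hence $-K_X$ is ample exactly when $s>0$, $r>s$, and $r>d-s$, which are conditions (b) and (d); and the ray spanned by $c$ contracts $\tilde D_0\simeq Z$, a divisor with anti-ample normal bundle $\sO_Z(-s)$, to a point, so it is of type $(n-1,0)$.

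For the classification $(1)\Rightarrow(2)$ I would argue as follows. Let $\cont_R\colon X\to X'$ be the given contraction, with exceptional divisor $E$ mapped to a point; by adjunction $-K_E=(-K_X)|_E-\sO_E(E)$ is ample, since $-K_X|_E$ is ample and $\sO_E(E)$ is anti-ample, so $E$ is a Fano manifold. Since $\rho_X=3$, the cone $\NE(X)$ has two further extremal rays, and the core of the argument is to show that one of them has contraction a smooth blow-down $b\colon X\to Y$ along a smooth codimension-two center, with $\rho_Y=2$. Granting this, the remaining extremal ray of $Y$ gives a contraction $\pi\colon Y\to Z$ onto an $(n-1)$-fold with $\rho_Z=\rho_Y-1=1$; I would check that $\pi$ is a $\pr^1$-bundle and that $b|_E$ identifies $E$ with a section $D_0:=b(E)$ mapping isomorphically to $Z$, thereby identifying $Z$ with the Fano manifold $E$ and fixing its index $r$ and ample generator $\sO_Z(1)$. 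The section $D_0$ has anti-ample normal bundle, which both forces the split form $Y\simeq\pr_Z(\sO_Z\oplus\sO_Z(s))$ with $s>0$ and exhibits the complementary section $D$; setting $W=b(\tilde E)$, I would show $W$ lies in $D$ and read off its class as $W\in|\sO_Z(d)|$. Finally, reversing the numerical computation of the previous paragraph, ampleness of $-K_X$ translates exactly into $r>s>0$ and $r>d-s$, giving (a)--(d).

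The main obstacle is precisely the step isolated above: proving that, after removing the ray $R$, one of the two remaining extremal rays of the Fano manifold $X$ is contracted by the blow-up of a smooth codimension-two center to a smooth $Y$, and that $Y$ then carries a $\pr^1$-bundle structure over $Z\simeq E$. This requires the fine structure theory of extremal contractions on smooth Fano varieties---bounds on fibre dimensions and lengths of rays, together with the rigidity imposed by $E$ and its anti-ample normal bundle---in order to exclude small contractions, fibre-type contractions of the wrong relative dimension, and blow-ups of singular or higher-codimensional centers. Controlling the mutual position of the three rays so that $W$ is forced into the section $D$ as a divisor of class $\sO_Z(d)$, and extracting the \emph{sharp} inequalities $r>s>0$ and $r>d-s$ rather than weaker numerical bounds, is the delicate part; the reverse computation of the second paragraph is exactly what tells me which inequalities to aim for.
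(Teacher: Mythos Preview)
Your $(2)\Rightarrow(1)$ is essentially the paper's Lemma~\ref{conv}, and the overall shape of $(1)\Rightarrow(2)$ is right, but there are two real gaps. First, from $\rho_X=3$ you conclude that $\NE(X)$ has exactly three extremal rays; this does not follow (a polyhedral cone in a three-dimensional vector space can have any number $\geq 3$ of edges), and in fact the paper shows (Remark~\ref{number}) that for these varieties $\NE(X)$ has three or four rays depending on the sign of $d-s$. The paper's Section~3 accordingly works with an a priori unknown list $R_1,\dots,R_m$ of rays.

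Second, and more importantly, you do not isolate the dichotomy on which the whole classification turns. For an $(n-1,n-2)^{\sm}$-type ray $R_0$ with contraction $\phi_0$, either $(E\cdot R_0)>0$, in which case every fibre of $\phi_0$ meets $E$ and the center $W_0$ is forced to lie inside $\phi_0(E)=D_0$, the section with \emph{anti}-ample normal bundle---so your step ``I would show $W$ lies in $D$'' fails outright; or $(E\cdot R_0)=0$, in which case $W_0$ is disjoint from $D_0$ and sits in the complementary section $D$ as required. The entire technical content of the paper's proof (Propositions~\ref{step2}, \ref{step31}, \ref{step32}) is devoted to manufacturing a ray of the second kind, and the decisive tool---absent from your outline---is an \emph{elementary transformation}: starting from any $(n-1,n-2)^{\sm}$ blow-down $\phi_0\colon X\to Y_0$, one shows that $Y_0$ is Fano with a $\pr^1$-bundle structure $\pi\colon Y_0\to Z$ and that $\pi|_{W_0}$ has degree one onto its image; blowing down the strict transform of $\pi^{-1}(\pi(W_0))$ then gives a second contraction $\phi'\colon X\to Y'$ whose exceptional divisor is disjoint from $E$. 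Without this mechanism the argument cannot be completed.
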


\medskip

\noindent\textbf{Acknowledgements.}
The author would like to express his graditude to Professor Shigefumi Mori 
for warm encouragements.
The author is partially supported by JSPS Fellowships for Young Scientists.

\medskip

\noindent\textbf{Notation and terminology.}
We always work over the complex number field $\C$.
The theory of extremal contraction, we refer the readers to \cite{KoMo}. 
For a smooth projective variety $X$ of dimension $n$ 
and a $K_X$-negative extremal ray 
$R\subset\overline{\NE}(X)$, let $\cont_R\colon X\rightarrow Y_R$ be the associated 
extremal contraction corresponds to $R$. We also let 
\[
\Exc(R):=\Exc(\cont_R)=\{x\in X|\cont_R \text{ is not isomorphism around }x\}.
\]
We say $R$ is \emph{of fiber type} (resp.\ \emph{divisorial, small}) if 
the associated contraction morphism 
${\cont}_R\colon X\rightarrow Y$ is of fiber type (resp.\ divisorial, small).
We say that $R$ (or $\cont_R$) is \emph{of type} $(m,l)$ (or \emph{of $(m,l)$-type}) 
if $\dim\Exc(R)=m$ and $\dim\cont_R(\Exc(R))=l$. 
We also say that $R$ (or $\cont_R$) is \emph{of type} $(n-1,n-2)^{\sm}$ 
(or \emph{of $(n-1,n-2)^{\sm}$-type}) 
if the morphism $\cont_R$ is the blowing up morphism of 
a smooth projective variety along a smooth subvariety of codimension $2$. 

For a proper variety $X$, the Picard number of $X$ is denoted by $\rho_X$. 
For a closed subvariety $Y\subset X$, let $\NC(Y,X)$ be the image of the morphism $\NC(Y)\rightarrow\NC(X)$. 

We say $X$ is a \emph{Fano manifold} if $X$ is a smooth projective variety 
whose anticanonical divisor $-K_X$ is ample. 
For a Fano manifold $X$, let its \emph{index} be 
\[
\max\{m\in\mathbb{N}\mid-K_X\sim mL\text{ for some Cartier divisor }L\}.
\]

For abbreviation, we let \emph{pt} stand \emph{point}.

\medskip

\section{Preliminaries}

\smallskip

We consider the case that there exists a prime divisor $E\subset X$ such that $\dim\NC(E,X)=1$. 

\begin{lemma}\label{n1}
Let $X$ be an $n$-dimensional smooth projective variety and 
$E\subset X$ be a prime divisor with $\dim\NC(E,X)=1$. 
We assume that there exists a $K_X$-negative extremal ray 
$R\subset\overline{\NE}(X)$ such that $E\cap\Exc(R)\neq\emptyset$ and 
$\cont_R(E)$ is not a point. 
Then we have $(E\cdot R)>0$ and 
$R$ is either of type $(n-1,n-2)^{\sm}$ or the morphism $\cont_R$ is 
a conic bundle. 
\end{lemma}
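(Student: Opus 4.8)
The plan is to analyze the geometry of the extremal contraction $\cont_R$ restricted to the divisor $E$, exploiting the fact that $\dim\NC(E,X)=1$ forces all curves in $E$ to be numerically proportional. First I would establish the sign $(E\cdot R)>0$. Since $\dim\NC(E,X)=1$, the image $\NC(E,X)$ is a single ray, and every curve contained in $E$ lies on this ray. Because $E\cap\Exc(R)\neq\emptyset$ while $\cont_R(E)$ is not a point, the contraction $\cont_R$ is nontrivial on $E$ but does not collapse $E$ entirely; hence there are curves in $E$ contracted by $\cont_R$ (those in $E\cap\Exc(R)$) as well as curves in $E$ not contracted. The contracted curves in $E$ span exactly the ray $R$, and since $E$ meets but is not contained in $\Exc(R)$, a general contracted curve meets $E$ properly, giving $(E\cdot R)>0$. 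I would make this precise by choosing a curve $C\subset E\cap\Exc(R)$ with $[C]\in R$ and noting that $E$ cannot be negative on $R$ (else $E\supseteq\Exc(R)$ locally would force $\cont_R(E)$ to be a point), combined with $(E\cdot R)\neq 0$ which follows from $E$ not being trivial on the contracted locus it meets.

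Next I would identify the type of $R$ using the length and fiber-dimension estimates from extremal contraction theory. The key structural input is the Ionescu--Wi\'sniewski inequality: for any fiber $F$ of $\cont_R$ over a point of $\cont_R(\Exc(R))$, one has $\dim\Exc(R)+\dim F\geq \dim X + \ell(R) - 1$, where $\ell(R)$ is the length of the ray. The hypothesis that $\cont_R(E)$ is not a point, together with $\dim\NC(E,X)=1$, severely constrains $\Exc(R)$. I would split into cases according to whether $R$ is of fiber type, divisorial, or small. If $R$ is small, then $\Exc(R)$ has codimension at least two and the curves in it still lie on the single ray spanning $\NC(E,X)$; I expect to derive a contradiction with $(E\cdot R)>0$ or with smoothness, ruling out the small case. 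In the divisorial case, $\Exc(R)$ is a divisor; since $E$ meets it but $\cont_R(E)$ is a positive-dimensional image, the fibers of $\cont_R$ over $\cont_R(\Exc(R))$ should be low-dimensional, and the inequality should pin the contraction down to the blowup of a smooth variety along a smooth codimension-two center, i.e.\ type $(n-1,n-2)^{\sm}$.

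The remaining possibility is that $R$ is of fiber type. Here $\Exc(R)=X$, so the condition $E\cap\Exc(R)\neq\emptyset$ is automatic, and the condition that $\cont_R(E)$ is not a point means $E$ dominates a positive-dimensional subvariety of $Y_R$. Combined with $\dim\NC(E,X)=1$, every fiber of $\cont_R$ meeting $E$ must be one-dimensional, and the numerical class of a fiber coincides with the generator of $\NC(E,X)$; I would argue the general fiber is a smooth rational curve with $(-K_X\cdot F)=2$, which forces $\cont_R$ to be a conic bundle. The main obstacle I anticipate is the divisorial case: ruling out a non-smooth or higher-codimension divisorial contraction and upgrading the crude dimension bounds to the precise conclusion that $\cont_R$ is the smooth blowup of type $(n-1,n-2)^{\sm}$. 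This requires carefully analyzing the fibers of $\cont_R$ inside $E$, using that all such fibers have numerically proportional curves to show they are one-dimensional, then invoking the Ando or Ionescu--Wi\'sniewski classification of divisorial contractions with one-dimensional fibers to conclude smoothness of both the target and the center.
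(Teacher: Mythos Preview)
Your argument for $(E\cdot R)>0$ contains a genuine error. You write that ``there are curves in $E$ contracted by $\cont_R$ (those in $E\cap\Exc(R)$)'' and then propose to ``choose a curve $C\subset E\cap\Exc(R)$ with $[C]\in R$.'' But this is exactly what cannot happen: if some irreducible curve $C\subset E$ had $[C]\in R$, then because $\dim\NC(E,X)=1$ every curve in $E$ would lie in $R$, and $\cont_R(E)$ would be a point, contrary to hypothesis. The paper's argument is the contrapositive of your claim: pick any irreducible curve $C$ with $[C]\in R$ and $C\cap E\neq\emptyset$ (such a curve exists since $E\cap\Exc(R)\neq\emptyset$); the observation above forces $C\not\subset E$, hence $(E\cdot C)>0$. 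No curve of $R$ lies inside $E$; they all meet $E$ properly.

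For the classification part, your plan via the Ionescu--Wi\'sniewski inequality and a three-way split (small/divisorial/fiber type) is not wrong in spirit, but it is considerably more laborious than what is needed, and you correctly anticipate trouble (``the main obstacle I anticipate is the divisorial case''). The paper bypasses all of this with a single uniform step: once $(E\cdot R)>0$ is known, every irreducible curve $C'$ with $[C']\in R$ satisfies $C'\cap E\neq\emptyset$ and $C'\not\subset E$; hence if some fiber $S$ of $\cont_R$ had $\dim S\geq 2$, the intersection $S\cap E$ would contain an irreducible curve, which would be a curve in $E$ with class in $R$ --- the contradiction just obtained. Thus all fibers of $\cont_R$ have dimension at most $1$, and a direct appeal to Ando's classification of extremal contractions with one-dimensional fibers \cite{ando,wisn} gives the dichotomy: $(n-1,n-2)^{\sm}$ or conic bundle. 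There is no need to treat the small case separately or to invoke length estimates.
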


\begin{proof}
There exists an irreducible curve $C\subset X$ with $C\cap E\neq\emptyset$ and $[C]\in R$. 
If $C\subset E$, then $\cont_R(E)=\text{pt}$ holds. This leads to a contradiction. 
Hence $(E\cdot R)>0$ holds 
since $C\cap E\neq\emptyset$ and $C\not\subset E$. 
Using same argument, we have $C'\cap E\neq\emptyset$ and $C'\not\subset E$ 
for any curve $C'\subset X$ with $[C']\in R$ since $(E\cdot C')>0$ holds. 
If there exists a closed subvariety $S\subset X$ such that $\cont_R(S)=\text{pt}$ and $\dim S\geq 2$, then 
there exists an irreducible curve $C\subset S\cap E$. 
However, such $C$ satisfies that $[C]\in R$ and $C\subset E$, 
this leads to a contradiction. 
Thus all fiber of $\cont_R$ are of dimension $\leq 1$. 
Therefore $R$ is of type $(n-1,n-2)^{\sm}$ or $\cont_R$ is a conic bundle by 
Ando's classification result \cite{ando,wisn}. 
\end{proof}

We classify smooth projective varieties having $(n-1,0)$-type or $(n,1)$-type 
($K_X$-negative) extremal contraction 
and having $\pr^1$-bundle structure. This classification result is essential 
for the proof of Theorem \ref{mainthm}. 

\begin{proposition}\label{p1}
Let $Y$ be an $n$-dimensional smooth projective variety. 
We assume that there exists distinct $K_X$-negative extremal rays 
$R$, $R'\subset\overline{\NE}(Y)$ with the associated contraction morphisms 
$\sigma:=\cont_R\colon Y\rightarrow V$, $\pi:=\cont_{R'}\colon Y\rightarrow Z$, respectively. We assume that those $R$ and $R'$ satisfy the following properties:
\begin{enumerate}
\renewcommand{\theenumi}{\arabic{enumi}}
\renewcommand{\labelenumi}{$(\theenumi)$}
\item\label{p101}
There exists a prime divisor $E\subset Y$ such that $\sigma(E)=\text{pt}$.
\item\label{p102}
$\pi$ is a $\pr^1$-bundle. 
\end{enumerate}
Then $Y\simeq\pr_Z(\sO_Z\oplus\sO_Z(s))\xrightarrow{\pi}Z$ such that
\begin{enumerate}
\renewcommand{\theenumi}{\arabic{enumi}}
\renewcommand{\labelenumi}{$(\theenumi)$}
\item\label{p111}
$Z$ is an $(n-1)$-dimensional Fano manifold of $\rho_Z=1$ with index $r$ 
such that the ample generator of $\Pic(Z)$ is $\sO_Z(1)$,
\item\label{p112}
$r>s\geq 0$. 
\end{enumerate}
\end{proposition}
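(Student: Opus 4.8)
The plan is to reduce everything to the statement that $\rho_Y=2$ and $Y$ is Fano, and then read off the structure from the two extremal contractions. First I would analyze $\pi|_E\colon E\to Z$. Since $R\neq R'$, no fiber $F\cong\pr^1$ of $\pi$ can lie in $E$: if $F\subset E$ then $\sigma(E)=\text{pt}$ forces $[F]\in R$, while $[F]\in R'$ by construction, contradicting $R\neq R'$. Hence $\pi|_E$ contracts no curve, so it is finite; as $\dim E=\dim Z=n-1$ and $E$ is irreducible, $\pi|_E$ is finite surjective, and therefore $(\pi|_E)_*\colon\NC(E)\to\NC(Z)$ is surjective. On the other hand $\sigma(E)=\text{pt}$ means every curve in $E$ lies in $\ker(\sigma_*)=\R\cdot R$, so $\NC(E,Y)$ is one-dimensional. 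Factoring $(\pi|_E)_*$ through $\NC(E,Y)\xrightarrow{\pi_*}\NC(Z)$ shows $\rho_Z\leq 1$, hence $\rho_Z=1$ and $\rho_Y=\rho_Z+1=2$. Now $\overline{\NE}(Y)$ is spanned by the two extremal rays $R,R'$, and $-K_Y$ is positive on both (both are $K_Y$-negative); by Kleiman's criterion $-K_Y$ is ample, so $Y$ is a Fano manifold, and then $Z$, being the base of the fiber-type extremal contraction $\pi$ of the Fano manifold $Y$ with $Z$ smooth, is itself Fano. Since $\rho_Z=1$ I may write $\Pic(Z)=\Z\,\sO_Z(1)$ with $\sO_Z(1)$ ample, $-K_Z\sim\sO_Z(r)$, $r\geq 1$; this already gives conclusion $(\ref{p111})$, and it lets me write $Y=\pr_Z(\sE)$ for a normalized rank-$2$ bundle $\sE$ (the section produced below also guarantees that $\pi$ is an honest projectivization).

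The main step, and the place I expect the real work, is to show that $E$ is a \emph{section} of $\pi$, i.e.\ $\deg(\pi|_E)=(E\cdot f)=1$, where $f$ is the fiber class. I would argue via the two-dimensional cone $\overline{\Eff}(Y)$. One of its boundary rays is $\R_{\geq0}\,\pi^*\sO_Z(1)$ (nef but not big, as $(\pi^*\sO_Z(1))^n=0$). The other boundary ray is generated by the minimal section $S$ of the $\pr^1$-bundle, a prime divisor with $(S\cdot f)=1$ and $N_{S/Y}$ of non-positive degree; moreover $S$ is the only prime divisor on that ray, since for a section with non-positive normal bundle the only effective divisors numerically proportional to $S$ are the non-reduced multiples $dS$. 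The divisor $E$ is not proportional to $\pi^*\sO_Z(1)$ because $(E\cdot f)=\deg(\pi|_E)>0$, and $E$ lies on the second boundary ray: indeed every curve $C\subset E$ spans $R$, so $(E\cdot C)\leq 0$ (with strict inequality if $\sigma$ is birational, and $(E\cdot C)=0$ if $\sigma$ is of fiber type), which places $[E]$ on the extremal side of $\overline{\Eff}(Y)$. Being prime, $E$ must therefore coincide with $S$, giving $(E\cdot f)=1$. The delicate points I will have to treat carefully are that $\sigma$ is \emph{not} assumed birational — it may be a fiber-type contraction (the case $s=0$, $Y=Z\times\pr^1$) — and that the extremal-ray description of $\overline{\Eff}(Y)$ holds uniformly in both cases; the inequality $(E\cdot C)\le 0$ and the primality of $E$ are what make the two cases uniform.

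Once $E$ is a section it corresponds to a sub-line bundle of $\sE$, exhibiting $\sE$ as an extension of two line bundles on $Z$, and $N_{E/Y}\simeq\sO_Z(-s)$ with $s\geq 0$ (the sign coming from $(E\cdot C)\le 0$). After twisting $\sE$ by a line bundle I may normalize this extension as $0\to\sO_Z\to\sE\to\sO_Z(-s)\to 0$, whose class lies in $\Ext^1_Z(\sO_Z(-s),\sO_Z)\cong H^1(Z,\sO_Z(s))$. For $s>0$ the bundle $\sO_Z(s)$ is ample and $\sO_Z(s)\sim K_Z\otimes\sO_Z(s+r)$ with $\sO_Z(s+r)$ ample, so Kodaira vanishing gives $H^1(Z,\sO_Z(s))=0$; for $s=0$ one has $H^1(Z,\sO_Z)=0$ since $Z$ is Fano. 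In either case the extension splits, so $\sE\simeq\sO_Z\oplus\sO_Z(-s)$ and hence $Y\simeq\pr_Z(\sO_Z\oplus\sO_Z(s))$.

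Finally, to obtain $r>s$ in $(\ref{p112})$ I would exploit again that $-K_Y$ is ample. With $\xi$ the tautological class of $\pr_Z(\sO_Z\oplus\sO_Z(s))$ one computes $-K_Y\sim 2\xi+(r-s)\pi^*\sO_Z(1)$, and evaluating on the extremal curve $C\subset E$ (which satisfies $(\xi\cdot C)=0$, $(\pi^*\sO_Z(1)\cdot C)=1$) gives $(-K_Y\cdot C)=r-s$. Ampleness of $-K_Y$ forces $(-K_Y\cdot C)>0$, i.e.\ $r>s$, together with the already-established $s\geq 0$. This yields conclusion $(\ref{p112})$ and completes the proof; the only genuinely nontrivial ingredient is the identification of $E$ with the minimal section in the second paragraph, everything else being the cone-theoretic deduction that $Y$ and $Z$ are Fano and a vanishing-theorem splitting.
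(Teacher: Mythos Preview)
Your overall architecture matches the paper's: first deduce $\rho_Z=1$ and $\rho_Y=2$, hence $Y$ and then $Z$ are Fano; next show that $E$ is a section of $\pi$; finally split the resulting extension of line bundles and read off $r>s$ from the ampleness of $-K_Y|_E$. The first and last parts are essentially the same as in the paper and are fine.

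The gap is in your argument that $E$ is a section. You compare $E$ with ``the minimal section $S$ of the $\pr^1$-bundle'' and conclude $E=S$ because both are prime divisors on the non-$\pi^*$ boundary ray of $\overline{\Eff}(Y)$. But the existence of \emph{any} section of $\pi$ is precisely what is at stake: a smooth morphism with all fibers $\pr^1$ is a projectivization $\pr_Z(\sE)$ if and only if it admits a section, the obstruction lying in $\Br(Z)[2]$, and you have not shown this vanishes for an arbitrary Fano manifold $Z$ with $\rho_Z=1$. Your parenthetical ``the section produced below also guarantees that $\pi$ is an honest projectivization'' makes the circularity explicit: you need $S$ to exist in order to identify $E$ with it, but the only section you ever produce is $E$ itself. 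The paper avoids this entirely by proving directly that $\pi|_E$ is unramified: through a hypothetical branch point $z\in Z$ one picks a general very free rational curve $B\ni z$, so that $\pi^{-1}(B)$ is a Hirzebruch surface $\F_m$; analyzing the Stein factorization of $\sigma|_{\F_m}$ one sees that $E\cap\F_m$ is a disjoint union of sections of $\F_m\to B$, contradicting ramification over $z$. Then $\pi|_E$ is \'etale (finite, with $E$ Cohen--Macaulay and $Z$ smooth), and since $Z$ is simply connected and $E$ is irreducible, $\pi|_E$ is an isomorphism. Only after this does the paper write $Y=\pr_Z(\pi_*\sO_Y(E))$.
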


\begin{proof}
We can show that $R$ is of type $(n-1,0)$ or $(n,1)$ since $\sigma(E)=\text{pt}$. 
We replace $E$ by a general smooth fiber of $\sigma$ if $R$ is of type $(n,1)$. 
The restriction morphism $\pi|_E\colon E\rightarrow Z$ is a finite morphism, 
thus $\pi|_E$ is surjective. 
We have $\rho_Z=1$ since $\dim\NC(E,Y)=1$. Hence $\rho_Y=2$ holds. 
In particular, $Y$ is a Fano manifold since there exists at least two $K_X$-negative extremal rays. 
Therefore $Z$ is also a Fano manifold by \cite[Corollary 2.9]{KMM}. 
Let $r$ be the index of $Z$ and $\sO_Z(1)$ be the 
ample generator of $\Pic(Z)$. 

\begin{claim}\label{sect}
$E$ is a section of $\pi$ $($i.e. the restriction morphism 
$\pi|_E\colon E\rightarrow Z$ is isomorphism$)$. 
\end{claim}

\begin{proof}[{Proof of Claim \ref{sect}}]
If $\pi|_E$ is unramified, then $\pi|_E$ is 
\'etale , hence $\pi|_E$ is isomorphism since $E$ and $Z$ are smooth Fano manifolds 
under the assumption. 
Thus it is enough to show that $\pi|_E$ is unramified. 

We assume that there exists a branch point $z\in Z$ of $\pi|_E$. 
Then we can pick general smooth (very free) rational curve $z\in B\subset Z$ with $B\not\subset\Br(\pi|_E)$, 
where $\Br(\pi|_E)$ is the branch locus of $\pi|_E$. 
We note that $Z$ is a rationally connected variety since $Z$ is a Fano manifold 
(see \cite{KMM}). 
Then the morphism $S:=\pi^{-1}(B)\rightarrow B$ is isomorphic to a Hirzebruch surface with the ruling $\F_m=\pr_{\pr^1}(\sO_{\pr^1}\oplus\sO_{\pr^1}(m))\rightarrow\pr^1$ 
for some $m\geq 0$. 
We note that $E\cap S\subset S$ is a reduced divisor and $\sigma(E\cap S)=\text{pt}$. 
Thus the Stein factorization $\sigma'\colon S\rightarrow T$ of the morphism 
$\sigma|_S\colon S\rightarrow V$ satisfies either of the following:
\begin{enumerate}
\item
$m=0$ and $\sigma'$ is a projection onto $\pr^1$.
\item
$m>0$ and $\sigma'$ is the contraction morphism contracting the $(-m)$-curve. 
\end{enumerate}
Therefore $E\cap S\subset S$ is the sum of disjoint union of sections of $\pi|_S$ in any case. 
However, this contradict to the choice of $z\in Z$ since 
$z\in Z$ is a branch point of $\pi|_E$. 
\end{proof}

Thus we can write $Y=\pr_Z(\sE)$ with $\sE:=\pi_*\sO_Y(E)$. 
We take $s\in\Z$ such that $\sO_E(E)\simeq\sO_Z(-s)$. 
We have $s\geq 0$; if $R$ is of type $(n-1,0)$ then $s>0$ since $\sO_E(E)$ 
is anti-ample, 
if $R$ is of type $(n,1)$ then $s=0$ since $E$ is a general smooth fiber of $\sigma$. 

We consider the exact sequence 
\[
0\rightarrow\sO_Y\rightarrow\sO_Y(E)\rightarrow\sO_E(E)\rightarrow 0.
\]
We obtain 
\[
0\rightarrow\sO_Z\rightarrow\sE\rightarrow\sO_Z(-s)\rightarrow 0.
\]
We have $\sE\simeq\sO_Z\oplus\sO_Z(-s)$ since $\Ext^1(\sO_Z(-s),\sO_Z)=0$. 
Consequently, $r>s(\geq 0)$ holds 
since $\sO_E(-K_Y|_E)\simeq\sO_E(-K_E+E|_E)\simeq\sO_Z(r-s)$ is ample. 
\end{proof}

\begin{remark}\label{rhotwormk}
The ray $R$ in Proposition \ref{p1} is 
\begin{enumerate}
\renewcommand{\theenumi}{\alph{enumi}}
\renewcommand{\labelenumi}{(\theenumi)}
\item\label{rhotwormk1}
of type $(n-1, 0)$ if and only if $s>0$, 
\item\label{rhotwormk2}
of type $(n, 1)$ if and only if $s=0$ (hence $Y\simeq Z\times\pr^1$). 
\end{enumerate}
We also remark that the case \eqref{rhotwormk1} is exactly the case in 
\cite[Proposition 2.5]{fuj}. 
\end{remark}

Now, we prove the easy direction of Theorem \ref{mainthm}. 

\begin{lemma}\label{conv}
Let $X$ be an $n$-dimensional smooth projective variety with $n\geq 3$. 
We assume that $X=\Bl_WY$ and $X$ satisfies the conditions of 
\eqref{mainthm2a}, \eqref{mainthm2b} and \eqref{mainthm2c} in Theorem \ref{mainthm}. 
Then $X$ is a Fano manifold if and only if $r>d-s$. 
\end{lemma}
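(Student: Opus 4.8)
The plan is to compute $-K_X$ explicitly and then, via Kleiman's criterion, to reduce the Fano property to the positivity of $-K_X$ on a finite generating set of $\overline{\NE}(X)$. First I would fix the numerical setup. Writing $H:=\pi^*\sO_Z(1)$, the relative Euler sequence of the $\pr^1$-bundle $\pi\colon Y=\pr_Z(\sO_Z\oplus\sO_Z(s))\to Z$ together with adjunction on the section $D$ (exactly as in the proof of Proposition \ref{p1}) gives $-K_Y\sim 2D+(r-s)H$. Since $\mu\colon X=\Bl_WY\to Y$ is the blow-up of the smooth codimension-two centre $W$ with exceptional divisor $G$, we have $K_X=\mu^*K_Y+G$, hence
\[
-K_X\sim 2\mu^*D+(r-s)\mu^*H-G .
\]
As $\rho_X=\rho_Y+1=3$, the classes $\mu^*D,\mu^*H,G$ form a basis of $\ND(X)$, and I will test curves $C$ through the triple $(\mu^*D\cdot C,\ \mu^*H\cdot C,\ G\cdot C)$.

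Next, the critical curve. Let $C'\subset D$ be the image in $D\cong Z$ of a general curve generating $\NC(Z)$, chosen to meet $W$ transversally, put $a:=(\sO_Z(1)\cdot C')>0$, and let $\tilde c\subset X$ be its strict transform. Then $\mu_*\tilde c=C'$ gives $\mu^*D\cdot\tilde c=sa$ and $\mu^*H\cdot\tilde c=a$, while $G\cdot\tilde c=\#(C'\cap W)=(\sO_Z(d)\cdot C')=da$, so that
\[
-K_X\cdot\tilde c=(r+s)a-da=(r+s-d)\,a .
\]
This already settles necessity: if $X$ is Fano then $-K_X$ is ample, so $-K_X\cdot\tilde c>0$ for the effective curve $\tilde c$, forcing $r>d-s$. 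Geometrically $\tilde c$ sweeps out the strict transform $\tilde D\cong Z$, the divisor contracted to a point by the $(n-1,0)$-ray of the Main Theorem, so this is exactly the ray on which the inequality is felt.

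For sufficiency, assume $r>d-s$ (recall also $r>s>0$). I would list the remaining candidate generators of $\overline{\NE}(X)$: the fibre $\ell$ of $G\to W$, the strict transform $\tilde f_0$ of a ruling of $\pi$ through a point of $W$, and the strict transform $\tilde c_E$ of a generating curve of the disjoint section $E:=D-sH$ (which avoids $W$). Their coordinates are $\ell=(0,0,-1)$, $\tilde f_0=(1,0,1)$, $\tilde c_E=(0,a,0)$, and one computes $-K_X\cdot\ell=1$, $-K_X\cdot\tilde f_0=1$, $-K_X\cdot\tilde c_E=(r-s)a$, all strictly positive. It then remains to show that $\ell,\tilde f_0,\tilde c_E,\tilde c$ generate $\overline{\NE}(X)$, after which Kleiman's criterion yields ampleness of $-K_X$, hence the Fano property.

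The main obstacle is precisely this generation statement. I would prove it by dualizing and checking that the four divisors
\[
\mu^*H,\quad \mu^*D,\quad d\,\mu^*D-sG,\quad \mu^*D+(d-s)\mu^*H-G
\]
are nef and cut out the facets of $\langle\ell,\tilde f_0,\tilde c_E,\tilde c\rangle$. The first two are pullbacks under $\mu$ of the two nef generators of $\Nef(Y)$ (namely $H$ and $D$), hence nef; the difficulty lies in the last two, which I would identify as pullbacks of ample classes under the two non-trivial elementary contractions of $X$ (equivalently, prove them semiample). Once all four are known to be nef they span $\Nef(X)$, and dualizing returns $\overline{\NE}(X)=\langle\ell,\tilde f_0,\tilde c_E,\tilde c\rangle$; since $-K_X$ is strictly positive on each generator, it is ample. (When $d\le s$ the class $\tilde c$ already lies in $\langle\ell,\tilde f_0,\tilde c_E\rangle$, the cone is simplicial, the inequality $r>d-s$ holds automatically, and the argument simplifies accordingly.)
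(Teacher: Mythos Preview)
Your proposal is correct and follows essentially the same route as the paper: compute $-K_X$ explicitly, identify the same four test curves (your $\ell,\tilde f_0,\tilde c_E,\tilde c$ are exactly the paper's $f,f',e,e'$), build the same intersection table, and conclude via Kleiman's criterion. The only cosmetic difference is that you write $-K_X$ in the basis $(\mu^*D,\mu^*H,G)$ whereas the paper uses $(E,(\pi\phi)^*\sO_Z(1),F)$; since $D\sim E+sH$ these are the same formula. Where you go a bit further is in actually outlining \emph{why} these four curves generate $\overline{\NE}(X)$---by exhibiting the four supporting nef divisors and recognizing two of them as pullbacks from $Y$ and the other two as coming from the remaining elementary contractions of $X$---whereas the paper simply records this as part of Claim~\ref{ray} without argument (and notes your case split $d>s$ versus $d\le s$ separately in Remark~\ref{number}).
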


\begin{proof}
Let $e_0\subset Z$ be a general irreducible curve, $m$ be the intersection number 
$(\sO_Z(1)\cdot e_0)$, $E\subset Y$ be the section of $\pi$ 
with $\sN_{E/Y}\simeq\sO_Z(-s)$ 
and $E\subset X$ be its strict=total transform 
(same notation but there are no confusion). 
Let $D'\subset X$ be the strict transform of $D\subset Y$, $e\subset E$ and $e'\subset D'$ 
be the strict transform of the curve $e_0\subset Z$. Finally, let $F\subset X$ be 
the exceptional divisor of the blowing up $\phi\colon X\rightarrow Y$, 
$f\subset X$ be a nontrivial fiber of $\phi$ 
and $f'\subset X$ be the strict transform of a fiber of $\pi$ passing through 
$W\subset Y$. 
Then we can show the following:

\begin{claim}\label{ray}
\begin{enumerate}
\renewcommand{\theenumi}{\arabic{enumi}}
\renewcommand{\labelenumi}{$(\theenumi)$}
\item
$\sO_X(-K_X)\simeq\sO_X(2E-F)\otimes(\pi\circ\phi)^*\sO_Z(r+s)$.
\item
$\NE(X)=\R_{\geq 0}[e]+\R_{\geq 0}[e']+\R_{\geq 0}[f]+\R_{\geq 0}[f']$.
\item
For an irreducuble curve $C\subset X$, $C\subset E$ holds if and only if $[C]\in\R_{\geq 0}[e]$ holds. 
\item
We obtain the following table of intersection numbers: 
\begin{center}
\begin{tabular}[t]{|c|c|c|c|c|} \hline
 & $E$ & $(\pi\phi)^*\sO_Z(1)$ & $F$ & $-K_X$ \\ \hline
$e$ & $-sm$ & $m$ & $0$ & $(r-s)m$ \\ \hline
$e'$ & $0$ & $m$ & $dm$ & $(r+s-d)m$ \\ \hline
$f$ & $0$ & $0$ & $-1$ & $1$ \\ \hline
$f'$ & $1$ & $0$ & $1$ & $1$ \\ \hline
\end{tabular}
\end{center}
\end{enumerate}
\end{claim}
Therefore $X$ is a Fano manifold if and only if $r>d-s$ by Claim \ref{ray}. 
\end{proof}

\begin{remark}\label{number}
The cone $\NE(X)$ is spanned by four rays if and only if $d-s>0$, 
three rays if and only if $d-s\leq 0$ by Claim \ref{ray}. 
\end{remark}

\medskip

\section{Proof of Theorem \ref{mainthm}}

\smallskip

Let $X$ be an $n$-dimensional Fano manifold with $\rho_X\geq 3$ 
and there exists an extremal ray $R\subset\NE(X)$ of type $(n-1,0)$. 
We can assume $\rho_X=3$ by 
\cite[Proposition 5]{tsukioka} and \cite[Proposition 3.1]{casagrande}. 
Let $E:=\Exc(R)$. We note that $\dim\NC(E,X)=1$ holds. 
We start to prove Theorem \ref{mainthm} by seeing the cone $\NE(X)$ in detail. 

\begin{proposition}\label{step1}
For any extremal ray $R_0\subset\NE(X)$ different from $R$, the ray 
$R_0$ is of birational type. 
Furthermore, we have the following properties: 
\begin{enumerate}
\renewcommand{\theenumi}{\arabic{enumi}}
\renewcommand{\labelenumi}{$(\theenumi)$}
\item\label{step101}
If $E\cap\Exc(R_0)\neq\emptyset$, then we have 
\begin{enumerate}
\renewcommand{\theenumii}{\alph{enumii}}
\renewcommand{\labelenumii}{\rm{(\theenumii)}}
\item\label{step101a}
$(E\cdot R_0)>0,$
\item\label{step101b}
$R_0$ is of type $(n-1,n-2)^{\sm}$ $($let $F_0\subset X$ be its exceptional divisor$)$,
\item\label{step101c}
$(F_0\cdot R)>0$. 
\end{enumerate}
\item\label{step102}
If $E\cap\Exc(R_0)=\emptyset$, then $R+R_0\subset\NE(X)$ is an extremal face 
of $\NE(X)$. 
\end{enumerate}
\end{proposition}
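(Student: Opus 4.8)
The plan is to analyze an arbitrary extremal ray $R_0\neq R$ according to whether its exceptional locus meets $E$. First I would record the basic mechanism governing $E$: since $\cont_R$ contracts $E$ to a point, every irreducible curve contained in $E$ is contracted by $\cont_R$ and hence has class in $R$; thus $\NC(E,X)$ is exactly the ray $R$, so $(E\cdot R)<0$ while no curve of $E$ lies in any ray different from $R$. This single observation drives everything. If $E\cap\Exc(R_0)=\emptyset$, then $\Exc(R_0)\neq X$, so $\cont_{R_0}$ cannot be of fiber type and $R_0$ is automatically birational; this already settles the birational claim in the situation of \eqref{step102}.

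Now suppose $E\cap\Exc(R_0)\neq\emptyset$. Since every curve of $E$ has class in $R\neq R_0$, no fiber of $\cont_{R_0}$ can lie inside $E$, so $\cont_{R_0}(E)$ is not a point. I would then apply Lemma~\ref{n1} to the prime divisor $E$ (which satisfies $\dim\NC(E,X)=1$): it yields immediately $(E\cdot R_0)>0$, proving \eqref{step101a}, and it tells us that $R_0$ is either of type $(n-1,n-2)^{\sm}$ or else $\cont_{R_0}$ is a conic bundle.

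The main obstacle is excluding the conic bundle alternative, which would make $R_0$ of fiber type. Suppose $\cont_{R_0}\colon X\rightarrow Y_0$ were a conic bundle, so $\dim Y_0=n-1$ and $\rho_{Y_0}=\rho_X-1=2$. Because $(E\cdot R_0)>0$ the divisor $E$ contains no fiber, hence $\cont_{R_0}|_E\colon E\rightarrow Y_0$ has finite fibers; being proper with $(n-1)$-dimensional image inside the $(n-1)$-dimensional $Y_0$, it is finite and surjective. Every curve of $E$ has class in $R$, so its image under $\cont_{R_0}$ has class in the single ray $\cont_{R_0*}(R)\subset\NC(Y_0)$, which is nonzero as $R\neq R_0$. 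Since $\cont_{R_0}|_E$ is finite surjective, every irreducible curve $C\subset Y_0$ is dominated by a curve of $E$, whence $[C]\in\cont_{R_0*}(R)$. Therefore $\overline{\NE}(Y_0)$ is the single ray $\cont_{R_0*}(R)$, giving $\rho_{Y_0}=1$ and contradicting $\rho_{Y_0}=2$. So $\cont_{R_0}$ is not a conic bundle, $R_0$ is of type $(n-1,n-2)^{\sm}$, and \eqref{step101b} together with the birational claim in case \eqref{step101} follows.

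It remains to prove \eqref{step101c} and the face statement \eqref{step102}. For \eqref{step101c}, let $F_0=\Exc(R_0)$ and $\bar F_0:=\cont_R(F_0)\subset V$; since $\cont_R$ contracts only $E$ and $F_0\neq E$, the image $\bar F_0$ is a prime divisor, and as $E\cap F_0\neq\emptyset$ it contains the point $v_0:=\cont_R(E)$. Writing the pullback (well defined, $V$ being $\Q$-factorial) as $\cont_R^*\bar F_0=F_0+cE$, the inclusion $E=\cont_R^{-1}(v_0)\subset\cont_R^{-1}(\bar F_0)$ forces $c>0$; pairing with $R$ and using $(\cont_R^*\bar F_0\cdot R)=0$ gives $(F_0\cdot R)=-c\,(E\cdot R)>0$, which is \eqref{step101c}. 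Finally, for \eqref{step102}, assume $E\cap\Exc(R_0)=\emptyset$, so $\cont_{R_0}$ is an isomorphism near $E$; then $E':=\cont_{R_0}(E)$ is a prime divisor of $Y_0$ with $\NC(E',Y_0)$ equal to the ray $\cont_{R_0*}(R)$ and $(E'\cdot\cont_{R_0*}R)=(E\cdot R)<0$, so $\cont_{R_0*}(R)$ is an extremal ray of $\overline{\NE}(Y_0)$. By the standard correspondence between extremal faces of $\overline{\NE}(X)$ containing $R_0$ and extremal faces of $\overline{\NE}(Y_0)$ induced by $\cont_{R_0}$, the $2$-dimensional cone $R+R_0$ is an extremal face of $\overline{\NE}(X)$.
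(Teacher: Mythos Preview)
Your argument is correct. For \eqref{step101a} and \eqref{step101b} you follow the paper's route exactly: apply Lemma~\ref{n1} and exclude the conic bundle alternative by showing that a finite surjection $E\to Y_0$ would force $\rho_{Y_0}=1$, contradicting $\rho_X=3$.

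Where you diverge is in \eqref{step101c} and \eqref{step102}. For \eqref{step101c} the paper treats the inequality $(F_0\cdot R)>0$ as essentially immediate (it is subsumed under ``\eqref{step101} holds by Lemma~\ref{n1}''); the intended one-line justification is that $E\neq F_0$ and $E\cap F_0\neq\emptyset$, so a curve in $E$ through a point of $E\cap F_0$ but not contained in $F_0$ has class in $R$ and meets $F_0$ positively. Your pullback computation via the $\Q$-factoriality of $V$ is valid but heavier than needed. For \eqref{step102} the paper argues contrapositively inside $\NE(X)$: since $(E\cdot R)<0$ and $(E\cdot R_0)=0$, failure of $R+R_0$ to be a face would produce another extremal ray $R'\neq R$ with $(E\cdot R')<0$, and then any curve in $R'$ would lie in $E$ and hence in $R$, a contradiction. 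Your approach instead pushes forward to $Y_0$, shows $\cont_{R_0*}R$ is extremal there using $E'=\cont_{R_0}(E)$, and invokes the face correspondence. Both arguments are short; the paper's stays entirely on $X$ and re-uses \eqref{step101}, while yours gives a cleaner structural reason but relies on the (standard) bijection between faces of $\NE(X)$ containing $R_0$ and faces of $\NE(Y_0)$.
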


\begin{proof}
Let $R_0\subset\NE(X)$ be an arbitrally extremal ray different from $R$. 

We assume that $E\cap\Exc(R_0)\neq\emptyset$. It is obvious that 
$\cont_{R_0}(E)\neq\text{pt}$. 
Hence \eqref{step101} holds by Lemma \ref{n1}. 
We note that if the morphism $\cont_{R_0}\colon X\rightarrow Y_0$ is a conic bundle, 
then $\rho_{Y_0}=1$ hence $\rho_X=2$ 
since $\cont_{R_0}(E)=Y_0$ and $\dim\NC(E,X)=1$. 

We assume that $E\cap\Exc(R_0)=\emptyset$. 
It is obvious that $R_0$ is of birational type. 
If $R+R_0$ does not span an extremal face, 
then there exists an extremal ray $R'\subset\NE(X)$ different from $R$ such that $(E\cdot R')<0$ since $(E\cdot R)<0$ and $(E\cdot R_0)=0$. 
However, applying \eqref{step101} for $R'$, this leads to a contradiction. 

In particular, $R_0$ is of birational type in any case. 
\end{proof}

The second step of the proof is to consider extremal rays of type $(n-1,n-2)^{\sm}$, 
in any case the image of the contraction morphism is again a Fano manifold and has a $\pr^1$-bundle structure. 
Furthermore, we can see that there is an elementally transform factoring $X$. 

\begin{proposition}\label{step2}
For any extremal ray $R_0\subset\NE(X)$ of type $(n-1,n-2)^{\sm}$, 
let $\phi_0:=\cont_{R_0}\colon X\rightarrow Y_0$, 
let $F_0\subset X$ be the exceptional divisor of $\phi_0$ 
and let $W_0\subset Y_0$ be the $($smooth$)$ center of the blowing up $\phi_0$. 
Then $Y_0$ is a Fano manifold which has a $\pr^1$-bundle structure 
$\pi\colon Y_0\rightarrow Z$. 
Furthermore, there exists an elementally transform factoring $X$. 
More precisely, there exists a commutative diagram 
\[\xymatrix{
& X \ar[dl]_{\phi'} \ar[dr]^{\phi_0} & \\
Y'\ar[dr]_{\pi'} & & Y_0 \ar[dl]^{\pi} \\
& Z & \\
}\]
such that $Y'$ is a smooth projective variety, 
$\phi'$ is the blowing up $($different from $\phi_0$$)$ 
along smooth subvariety of codimension $2$ whose exceptional divisor id the strict 
transform of the divisor $\pi^{-1}(\pi(W_0))$
and $\pi'$ is a $\pr^1$-bundle. 
\end{proposition}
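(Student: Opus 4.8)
The overall strategy is to exhibit two $K_{Y_0}$-negative extremal contractions of $Y_0$ — one contracting a divisor to a point and one a $\pr^1$-bundle — so that Proposition \ref{p1} applies and forces $Y_0\simeq\pr_Z(\sO_Z\oplus\sO_Z(s))$; the elementary transform then arises from the standard modification of this bundle along $W_0$. First I would record the formal consequences of the hypothesis. Since $\phi_0=\cont_{R_0}$ is the blow-up of the smooth variety $Y_0$ along the smooth codimension-two center $W_0$, the variety $Y_0$ is smooth projective with $\rho_{Y_0}=\rho_X-1=2$, and $-K_X\sim\phi_0^*(-K_{Y_0})-F_0$. Hence $\overline{\NE}(Y_0)$ is a two-dimensional cone with exactly two extremal rays $\Gamma_1,\Gamma_2$. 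Writing $E_0:=\phi_0(E)$, the equality $\dim\NC(E,X)=1$ passes to $\dim\NC(E_0,Y_0)=1$, so all curves contained in $E_0$ are numerically proportional and span a single class $\gamma$; I would take $\Gamma_1:=\R_{\geq 0}\gamma$ after checking that $\gamma$ spans an extremal ray, using that $R$ is extremal on $X$.

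The heart of the argument is to identify $\Gamma_2$ with a $\pr^1$-bundle. I would apply Lemma \ref{n1} to the prime divisor $E_0$ and the ray $\Gamma_2$: because every curve in $E_0$ lies in $\Gamma_1\neq\Gamma_2$, the contraction $\cont_{\Gamma_2}$ cannot send $E_0$ to a point, so $\cont_{\Gamma_2}(E_0)$ is not a point and $E_0\cap\Exc(\Gamma_2)\neq\emptyset$; Lemma \ref{n1} then forces $\Gamma_2$ to be of type $(n-1,n-2)^{\sm}$ or a conic bundle. The divisorial alternative would make $Y_0\to\cont_{\Gamma_2}(Y_0)$ a further blow-down onto a variety of Picard number one, and I would rule this out by combining $\dim\NC(E_0,Y_0)=1$ with the constraints of Proposition \ref{step1} on how rays can meet $E$ (a blown-down fiber meeting $E_0$ would produce a rational curve inside $E_0$ whose class falls in $\Gamma_2$). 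Hence $\cont_{\Gamma_2}\colon Y_0\to Z$ is a conic bundle; since $E_0$ dominates $Z$ and meets the general fiber transversally, it yields a section, whence the conic bundle has no degenerate fibers and is a $\pr^1$-bundle $\pi\colon Y_0\to Z$. This is the step I expect to be the main obstacle — both excluding the divisorial case cleanly and upgrading ``conic bundle'' to ``$\pr^1$-bundle.''

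With $\pi$ a $\pr^1$-bundle in hand, I would verify that the complementary contraction $\sigma:=\cont_{\Gamma_1}$ sends $E_0$ to a point: $E_0$ is a section of $\pi$, and pushing the $(n-1,0)$-contraction of $E$ on $X$ forward through $\phi_0$ (an isomorphism near $E$ when $E\cap\Exc(R_0)=\emptyset$, and controlled by $(E\cdot R_0)>0$, $(F_0\cdot R)>0$ from Proposition \ref{step1} in the remaining case) shows $\sigma(E_0)=\text{pt}$. Both $\Gamma_1,\Gamma_2$ are then $K_{Y_0}$-negative, so $Y_0$ is Fano, and Proposition \ref{p1} yields $Y_0\simeq\pr_Z(\sO_Z\oplus\sO_Z(s))$ with $Z$ an $(n-1)$-dimensional Fano manifold of $\rho_Z=1$, index $r$, and $r>s\geq 0$.

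Finally, to produce the elementary transform, I would note that $\pi(W_0)=:\Delta\subset Z$ is a divisor — that is, $W_0$ is horizontal for $\pi$ rather than a union of fibers, a point I would check using the position of $W_0=\phi_0(F_0)$ relative to the bundle — so that $\pi^{-1}(\Delta)$ is a divisor in $Y_0$ whose strict transform $F'\subset X$ is contractible. Blowing $F'$ down realizes $X$ as the blow-up $\phi'\colon X\to Y'$ of a smooth codimension-two center with $\Exc(\phi')=F'$, where $Y'$ is the elementary transform of $Y_0$ and hence again a $\pr^1$-bundle $\pi'\colon Y'\to Z$; the ray contracted by $\phi'$ is the one spanned by the strict transforms of the $\pi$-fibers meeting $W_0$. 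Assembling $\phi_0$, $\phi'$, $\pi$, $\pi'$ then gives the asserted commutative diagram.
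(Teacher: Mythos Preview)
Your outline has the right shape but contains two genuine gaps at exactly the steps you flag as the main obstacles, and the paper's proof handles both by a mechanism you do not mention.

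\medskip

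\textbf{Ruling out the divisorial alternative and upgrading to a $\pr^1$-bundle.} Your proposed arguments for both steps do not work. For the divisorial case you say a blown-down fiber meeting $E_0$ would produce a curve \emph{inside} $E_0$ with class in $\Gamma_2$; but Lemma~\ref{n1} already gives $(E_0\cdot\Gamma_2)>0$, so such fibers meet $E_0$ without being contained in it, and no contradiction arises. For the upgrade you say that a section of a conic bundle forces all fibers smooth; this is false (a section can pass through a smooth point of one component of a reducible fiber). The paper's proof replaces both arguments by a single intersection computation that crucially uses the Fano condition on $X$, not on $Y_0$: after showing $\pi|_{W_0}$ is finite (Claim~\ref{step22}) and that some fiber $f$ of $\pi$ meets $W_0$ (Claim~\ref{step23}), the strict transform $\hat f\subset X$ satisfies
\[
0<(-K_X\cdot\hat f)=(-K_{Y_0}\cdot f)-(F_0\cdot\hat f),
\]
with $(-K_{Y_0}\cdot f)\in\{1,2\}$ and $(F_0\cdot\hat f)\ge 1$. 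This forces $(-K_{Y_0}\cdot f)=2$ and $(F_0\cdot\hat f)=1$, which simultaneously excludes the $(n-1,n-2)^{\sm}$ case, shows $\pi$ is a smooth conic (hence $\pr^1$) bundle over $\pi(W_0)$, and gives $\deg(\pi|_{W_0})=1$. Emptiness of the discriminant then follows from $\rho_Z=1$, since $\Delta_\pi\cap\pi(W_0)=\emptyset$. Your proposal never invokes the ampleness of $-K_X$ on these strict transforms, and without it I do not see how to close either gap.

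\medskip

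\textbf{Order of the argument.} You apply Lemma~\ref{n1} to $\Gamma_2$ before knowing it is $K_{Y_0}$-negative, and only conclude ``both $\Gamma_1,\Gamma_2$ are $K_{Y_0}$-negative, so $Y_0$ is Fano'' at the end. The paper proves $Y_0$ is Fano first (Claim~\ref{step21}), treating the cases $E\cap\Exc(R_0)\neq\emptyset$ and $=\emptyset$ separately; in the second case this requires the face structure from Proposition~\ref{step1}\,\eqref{step102} and an auxiliary ray $R_1$. Only then is Lemma~\ref{n1} applied to the extremal ray $R^0$ with $(E_0\cdot R^0)>0$. You should reorganize accordingly, and in particular you need the case split: in case $E\cap\Exc(R_0)=\emptyset$ one has $W_0\cap E_0=\emptyset$, so arguments that route through $W_0\subset E_0$ (which you implicitly use) are unavailable.
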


\begin{proof}
Let $E_0:=\phi_0(E)$. 
We prove Proposition \ref{step2} by dividing these two cases: 
\begin{enumerate}
\renewcommand{\theenumi}{\Alph{enumi}}
\renewcommand{\labelenumi}{\rm{(\theenumi)}}
\item\label{caseA}
$E\cap\Exc(R_0)\neq\emptyset$,
\item\label{caseB}
$E\cap\Exc(R_0)=\emptyset$. 
\end{enumerate}

\begin{claim}\label{step21}
$Y_0$ is a Fano manifold. 
\end{claim}

\begin{proof}[Proof of Claim \ref{step21}]
We consider the case \eqref{caseA}. 
It is enough to show $(-K_{Y_0}\cdot C)>0$ for all irreducible curves in $W_0$ 
since $\overline{\NE}(Y_0)=\NE(Y_0)$. 
We note that $\dim\NC(E_0,Y_0)=1$. Thus all curves in $E_0$ are 
numerically proportional. Hence $(-K_{Y_0}\cdot C)>0$ holds. 

We consider the case \eqref{caseB}. 
We know that $R+R_0\subset\NE(X)$ is an extremal face 
by Proposition \ref{step1} \eqref{step102}. 
Let $R_1\subset\NE(X)$ be the unique extremal ray 
different from $R$ and $R_0$ such that $R_0+R_1\subset\NE(X)$ spans an extremal face. 
We have $(E\cdot R_1)>0$ since $(E\cdot R_0)=0$ and $(E\cdot R)<0$. 
Thus $R_1$ is of type $(n-1,n-2)^{\sm}$ by Proposition \ref{step1} \eqref{step101}. 
Let $\phi_1:=\cont_{R_1}\colon X\rightarrow Y_1$, 
$F_1\subset X$ be the exceptional divisor of $\phi_1$ 
and $W_1\subset Y_1$ be the (smooth) center of the blowing up $\phi_1$. 
We have $F_0\neq F_1$ since $E\cap F_0=\emptyset$ and $E\cap F_1\neq\emptyset$. 
Thus there exists a nontrivial fiber $C_1\subset X$ of $\phi_1$ 
such that $C_1\not\subset F_0$. 
We consider the surjective map ${\phi_0}_*:\NE(X)\twoheadrightarrow\NE(Y_0)$. 
Let $R_{Y_0}$, ${R_1}_{Y_0}\subset\NE(Y_0)$ be the images of $R$, $R_1\subset\NE(X)$, 
respectively. 
Then the cone $\NE(Y_0)$ is spanned by $R_{Y_0}, {R_1}_{Y_0}$. 
We note that $R_{Y_0}$ is a $K_X$-negative extremal ray of type $(n-1,0)$ 
since $(E_0\cdot R_{Y_0})<0$. 
Since ${R_1}_{Y_0}$ is spanned by the class $[{\phi_0}_*C_1]$ and 
\[
(-K_{Y_0}\cdot {\phi_0}_*C_1)=(-K_X\cdot C_1)+(F_0\cdot C_1)>0,
\]
both rays are $K_X$-negative. Hence $Y_0$ is a Fano manifold. 
\end{proof}

There exists a $K_X$-negative extremal ray $R^0\subset\NE(Y_0)$ such that 
$(E_0\cdot R^0)>0$ by Claim \ref{step21}. 
Let $\pi:=\cont_{R^0}\colon Y_0\rightarrow Z$. By Lemma \ref{n1}, 
the morphism $\pi$ is of type $(n-1,n-2)^{\sm}$ or a conic bundle. 

\begin{claim}\label{step22}
$\pi|_{W_0}:W_0\rightarrow Z$ is a finite morphism. 
\end{claim}

\begin{proof}[Proof of Claim \ref{step22}]
We consider the case \eqref{caseA}. 
If there exists an irreducible curve $C\subset W_0$ such that $\pi(C)=\text{pt}$, then 
$\pi(E_0)=\text{pt}$ holds 
since $W_0\subset E_0$ and $\dim\NC(E_0,Y_0)=1$. This leads to a contradiction. 

We consider the case \eqref{caseB}. 
If there exists an irreducible curve $C\subset W_0$ such that $\pi(C)=\{z\}$, 
then $\dim\phi_0^{-1}\pi^{-1}(z)=2$. 
Hence there exists an irreducible curve $C'\subset E\cap\phi_0^{-1}\pi^{-1}(z)$, 
however we have 
$[C']\in R\cap(R_0+R_1)$. Hence this leads to a contradiction. 
\end{proof}

\begin{claim}\label{step23}
There exists an irreducible curve $f\subset Y_0$ with $[f]\in R^0$ such that $f\cap W_0\neq\emptyset$. 
\end{claim}

\begin{proof}[Proof of Claim \ref{step23}]
We assume the contrary. We can assume that $\pi$ is of type $(n-1,n-2)^{\sm}$. 
Let $G\subset Y_0$ be the exceptional divisor of $\pi$. 

We consider the case \eqref{caseA}. 
We have $G\cap W_0=\emptyset$ by assumption. 
We note that $E_0\cap G\neq\emptyset$ and $E_0\neq G$. 
Thus there exists an irreducible curve $C\subset E_0$ such that $(G\cdot C)>0$ holds. Since $\dim\NC(E_0,Y_0)=1$, 
$(G\cdot C)>0$ holds for any irreducible curve $C\subset E_0$. 
In particular, this holds for any curve in $W_0$ intersects $G$. 
This contradict to the property $G\cap W_0=\emptyset$. 

We consider the case \eqref{caseB}. 
We have $\pi(E_0)\cap\pi(W_0)=\emptyset$ 
since $E_0\cap W_0=\emptyset$ and $G\cap W_0=\emptyset$. 
We note that $\pi(E_0)\subset Z$ is a divisor and $\pi(W_0)$ contains a curve. 
This leads to a contradiction since $\rho_Z=1$. 
\end{proof}

For any irreducible curve $f\subset Y_0$ with $[f]\in R^0$ such that 
$f\cap W_0\neq\emptyset$ (we note that such $f$ always exists by Claim \ref{step23}), 
we can pick the strict transform $\hat{f}\subset X$ by Claim \ref{step22}. 
Then we have 
\[
0<(-K_X\cdot\hat{f})=(-K_{Y_0}\cdot f)-(F_0\cdot\hat{f}).
\]
Since $\pi$ is of type $(n-1,n-2)^{\sm}$ or a conic bundle, we have 
$(-K_{Y_0}\cdot f)=1$ or $2$. 
Moreover, by the choice of $f$, 
we have $(F_0\cdot \hat{f})\geq 1$. Hence $(-K_{Y_0}\cdot f)=2$ and $(F_0\cdot\hat{f})=1$ holds. 
Therefore we can show that $\pi$ is a conic bundle and $\pr^1$-bundle around $f$, $\deg(\pi|_{W_0})=1$ 
and $\Delta_{\pi}\cap\pi(W_0)=\emptyset$, where $\Delta_{\pi}$ be the 
discriminant divisor of the conic bundle $\pi$ (see for example \cite[\S 4]{wisn}). 
We have $\Delta_{\pi}=\emptyset$ since $\rho_Z=1$. 
Therefore $\pi$ is a $\pr^1$-bundle. Thus 
there exists an elementally transform passing through $X$. More presicely, $X$ has a contraction morphism $\phi':X\rightarrow Y'$, 
the exceptional divisor is the strict transform of $\pi^{-1}\pi(W_0)$, 
and $Y'$ has a $\pr^1$-bundle structure $\pi'\colon Y'\rightarrow Z$ such that $\pi\circ\phi_0=\pi'\circ\phi'$. 
\end{proof}

\begin{remark}\label{rmkcase1}
There exists an extremal ray $R_0\subset\NE(X)$ such that $(E\cdot R_0)>0$ holds 
since $X$ is a Fano manifold. 
Then $R_0$ is of type $(n-1,n-2)^{\sm}$ by Lemma \ref{n1}. 
Thus $R_0$ satisfies the assumptions of Proposition \ref{step2}. 
In particular, there exists at least two rays in $\NE(X)$ of type $(n-1,n-2)^{\sm}$ 
($R_0$ and its elementally transform). 
\end{remark}

\begin{remark}\label{rmkcase2}
If there exists an extremal ray $R_0\subset\NE(X)$ such that $E\cap\Exc(R_0)=\emptyset$ holds, then 
$Y_0$ has two extremal contractions such that:
\begin{enumerate}
\renewcommand{\theenumi}{\arabic{enumi}}
\renewcommand{\labelenumi}{(\theenumi)}
\item\label{rmkcase21}
$(n-1,0)$-type that maps $E_0$ to a point, and
\item\label{rmkcase22}
$\pr^1$-bundle structure
\end{enumerate}
by the proof of Proposition \ref{step2}, where let $\cont_{R_0}\colon X\rightarrow Y_0$. 
Hence $Y_0\simeq\pr_Z(\sO_Z\oplus\sO_Z(s))\xrightarrow{\pi}Z$ such that 
$Z$ is a Fano manifold of $\rho_Z=1$ with index $r$ such that 
the ample generator of $\Pic(Z)$ is $\sO_Z(1)$ and $r>s>0$ by Proposition \ref{p1}. 
Since $W_0\cap E_0=\emptyset$ and $\deg\pi|_{W_0}=1$ holds, there exists $D\subset Y_0$, the unique section of $\pi$, such that 
$E_0\cap D=\emptyset$ and $W_0\subset D$ holds. Since $W_0\subset D$ is a smooth divisor, there exists $d\in\N$ 
such that $W_0\in|\sO_Z(d)|$. We note that $r>d-s$ holds by Lemma \ref{conv}. 

Therefore, to prove Theorem \ref{mainthm}, it is eonugh to show that there exists 
an extremal ray $R_0\subset\NE(X)$ of type $(n-1,n-2)^{\sm}$ with $(E\cdot R_0)=0$. 
\end{remark}

We consider two cases whether there exists another ``special" extremal ray different from $R$ or not.

\begin{proposition}\label{step31}
If there exists an extremal ray $R'\subset\NE(X)$ different from $R$ such that $R'$ is not of type $(n-1,n-2)^{\sm}$. 
Then there exists an extremal ray $R_0\subset\NE(X)$ of type $(n-1,n-2)^{\sm}$ with $(E\cdot R_0)=0$. 
\end{proposition}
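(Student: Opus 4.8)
The plan is to produce the required ray as the elementary-transform partner of the $(n-1,n-2)^{\sm}$ ray supplied by Remark \ref{rmkcase1}, and to pin down its intersection with $E$ by an explicit computation on the $\pr^1$-bundle model. First I would record what the hypothesis on $R'$ gives. Since $R'$ is not of type $(n-1,n-2)^{\sm}$ and $R'\neq R$, Proposition \ref{step1} forces $E\cap\Exc(R')=\emptyset$ (otherwise \eqref{step101} would make $R'$ of type $(n-1,n-2)^{\sm}$); hence $(E\cdot R')=0$ and $R+R'$ is an extremal face by \eqref{step102}. In particular $R'$ is a neighbour of $R$ in the three-dimensional cone $\NE(X)$, so $R$ is not surrounded only by rays on which $E$ is positive; this places us in the four-ray situation of Remark \ref{number} and is the feature that makes the construction below land on a genuinely new ray.

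Next I would invoke Remark \ref{rmkcase1} to fix an $(n-1,n-2)^{\sm}$ ray $R_0$ with $(E\cdot R_0)>0$, so $E\cap\Exc(R_0)\neq\emptyset$ (Case \eqref{caseA} of Proposition \ref{step2}). Proposition \ref{step2} then produces a $\pr^1$-bundle $\pi\colon Y_0\to Z$, and Proposition \ref{p1} identifies $Y_0\simeq\pr_Z(\sO_Z\oplus\sO_Z(s))$ with $Z$ a Fano manifold of Picard number one. Thus $X=\Bl_{W_0}Y_0$ with $\phi_0:=\cont_{R_0}$, and $E_0:=\phi_0(E)$ is the negative section. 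Because the fibres of $\phi_0$ all lie in $R_0$, the number $(E\cdot R_0)$ is constant along them, and its positivity forces the centre to lie in the section: $W_0\subset E_0$ with multiplicity one, so $E=\phi_0^{*}E_0-F_0$. Claim \ref{step22} and Claim \ref{step23} give $\deg(\pi|_{W_0})=1$, so $\bar W:=\pi(W_0)$ is a divisor of $Z$ meeting each fibre of $\pi$ in a single point.

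The key step is the elementary transform of Proposition \ref{step2}: it provides a second contraction $\phi'\colon X\to Y'$ of type $(n-1,n-2)^{\sm}$, whose contracted ray $R_0'$ is generated by the strict transform $g'$ of a fibre $\ell$ of $\pi$ meeting $W_0$. Using $E=\phi_0^{*}E_0-F_0$ I would then compute
\[
(E\cdot R_0')=(E\cdot g')=(E_0\cdot\ell)-(F_0\cdot g')=1-\deg(\pi|_{W_0})=0,
\]
since $E_0$ is a section (so $(E_0\cdot\ell)=1$) and $\ell$ meets $W_0$ in exactly one point (so $(F_0\cdot g')=1$). Moreover $(-K_X\cdot g')=(-K_{Y_0}\cdot\ell)-(F_0\cdot g')=2-1=1>0$, so $R_0'$ is a genuine $K_X$-negative extremal ray, and $R_0'\neq R$ because $(E\cdot R)<0$. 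Hence $R_0'$ is an $(n-1,n-2)^{\sm}$ ray with $(E\cdot R_0')=0$, which is exactly the ray required; feeding it into Remark \ref{rmkcase2} then completes the reduction.

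The step I expect to be the main obstacle is precisely this last identification, namely controlling how the strict transform of $E$ meets the new exceptional locus. This hinges on two facts that must be argued carefully rather than assumed: that the centre $W_0$ of $\phi_0$ actually lies inside the negative section $E_0$ (so that $E=\phi_0^{*}E_0-F_0$), and that $\deg(\pi|_{W_0})=1$ so that each transformed fibre meets $F_0$ once. The hypothesis that a non-$(n-1,n-2)^{\sm}$ ray $R'$ exists is what guarantees we are in this four-ray configuration, as opposed to the all-special case, where the cone must be analysed by separate combinatorial means.
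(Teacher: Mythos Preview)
Your proposal is essentially the argument of Proposition \ref{step32} transplanted to the setting of Proposition \ref{step31}, and it has a genuine gap at the invocation of Proposition \ref{p1}. That proposition requires that the extremal ray of $Y_0$ other than the $\pr^1$-bundle ray contract a prime divisor to a point; only then does it identify $E_0=\phi_0(E)$ with a section. In the paper's proof of Proposition \ref{step32} this is arranged by choosing $R_1$ \emph{adjacent to $R$} in $\NE(X)$, so that $\phi_{1*}(R)$ is an extremal ray of $\NE(Y_1)$ and, since every curve in $E$ lies in $R$, $\cont_{\phi_{1*}(R)}(E_1)=\text{pt}$. Your $R_0$, taken from Remark \ref{rmkcase1}, carries no such adjacency; indeed the paper's own analysis of the present case shows that the unique ray $R_1$ with $(E\cdot R_1)>0$ has neighbours $R'$ and $R_2$, \emph{not} $R$, so $\phi_{1*}(R)$ lands in the interior of $\NE(Y_1)$ and neither extremal contraction of $Y_1$ sends $E_0$ to a point. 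Once $E_0$ is not known to be a section, both ingredients of your computation collapse: the equality $E=\phi_0^{*}E_0-F_0$ needs $\text{mult}_{W_0}E_0=1$ (which you have not justified without smoothness of $E_0$), and $(E_0\cdot\ell)=1$ needs $E_0$ to be a section rather than a multisection. So $(E\cdot g')=d-a$ with $d=(E_0\cdot\ell)$ and $a=\text{mult}_{W_0}E_0$, and you have no control forcing $d=a$.

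The paper's proof of Proposition \ref{step31} proceeds quite differently and genuinely uses the hypothesis on $R'$. Because $R'$ is birational but not of type $(n-1,n-2)^{\sm}$, it has a fibre $E'$ of dimension $\geq 2$; this is the key geometric input. Labelling the rays cyclically as $R,R',R_1,\dots,R_m$, the paper first proves (Claim \ref{step311}) that for any $R_i$ with $(E\cdot R_i)>0$ one has $(F_i\cdot R')=0$, by producing a curve in $F_i\cap E'$ whose image would otherwise lie in two distinct rays of $\NE(Y_i)$. This is then leveraged (Claim \ref{step312}) to show $(E\cdot R_i)=0$ for all $i\geq 2$: if some $(E\cdot R_i)>0$ with $i\geq 2$, a sign analysis forces $F_1=F_i$, contradicting Fano-ness of $Y_i$. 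A linear-algebra count in $\NC(X)\cong\R^3$ then gives $m=2$, and since $R,R'$ are not of type $(n-1,n-2)^{\sm}$ while Remark \ref{rmkcase1} supplies two such rays, both $R_1,R_2$ are of type $(n-1,n-2)^{\sm}$; the ray $R_2$, with $(E\cdot R_2)=0$, is the one sought. Your outline never touches $E'$ or these cone constraints, and without them the crucial identification of $E_0$ as a section cannot be made.
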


\begin{proof}
There exists a closed subvariety $E'\subset X$ with $\dim E'\geq 2$ such that $\cont_{R'}(E')=\text{pt}$ by Proposition \ref{step1}. 
We note that $E\cap\Exc(R')=\emptyset$ and 
$R+R'\subset\NE(X)$ spans an extremal face by Proposition \ref{step1}. 
We denote 
\[
\NE(X)=R+R'+R_1+\cdots+R_m,
\]
where the set of $2$-dimensional extremal faces of $\NE(X)$ is $\{R+R', R'+R_1, R_1+R_2,\cdots, R_{m-1}+R_m, R_m+R\}$. 
We have $m\geq 2$ by Remark \ref{rmkcase1}.

We assume that $(E\cdot R_i)>0$ for some $1\leq i\leq m$. 
Then $R_i$ is of type $(n-1,n-2)^{\sm}$ by Proposition \ref{step1}. 
Let $\phi_i:=\cont_{R_i}\colon X\rightarrow Y_i$, 
let $F_i\subset X$ be the exceptional divisor of $\phi_i$ 
and let $W_i\subset Y_i$ be the (smooth) center of the blowing up $\phi_i$. 

\begin{claim}\label{step311}
$(F_i\cdot R')=0$ holds. 
\end{claim}

\begin{proof}[Proof of Claim \ref{step311}]
If $(F_i\cdot R')\neq 0$, then $F_i\cap E'\neq\emptyset$. Thus there exists an irreducible curve $C\subset F_i\cap E'$. 
We consider the surjective map ${\phi_i}_*:\NE(X)\twoheadrightarrow\NE(Y_i)$. 
Let $R_{Y_i}$, $R'_{Y_i}\subset\NE(Y_i)$ be the images of $R$, $R'\subset\NE(X)$, respectively. 
It is obvious that $R_{Y_i}\cap R'_{Y_i}=\{0\}\subset\NE(Y_i)$. 
We have $[{\phi_i}_*(C)]\in R'_{Y_i}$ since $C\subset E'$. 
We also have $[{\phi_i}_*(C)]\in R_{Y_i}$ since $\phi_i(C)\subset W_i\subset\phi_i(E)$ 
and $\dim\NC(\phi_i(E),Y_i)=1$. 
However, if $\phi_i(C)=\text{pt}$ then we have $[C]\in R_i\cap R'=\{0\}$. 
Thus $\phi_i(C)\neq\text{pt}$, this leads to a contradiction. 
\end{proof}

\begin{claim}\label{step312}
For any $2\leq i\leq m$, we have $(E\cdot R_i)=0$. 
\end{claim}

\begin{proof}[Proof of Claim \ref{step312}]
It is enough to show that $(E\cdot R_i)\leq 0$ for any $2\leq i\leq m$ 
by Proposition \ref{step1}. 

We assume that there exists $2\leq i\leq m$ such that $(E\cdot R_i)>0$. 
Then $R_i$ is of type $(n-1,n-2)^{\sm}$ by Proposition \ref{step1}. 
Let $\phi_i:=\cont_{R_i}\colon X\rightarrow Y_i$, 
let $F_i\subset X$ be the exceptional divisor of $\phi_i$ 
and let $W_i\subset Y_i$ be the (smooth) center of the blowing up $\phi_i$. 

$(E\cdot R')=0$ and $(E\cdot R_i)>0$ holds since $(E\cdot R)<0$. 
Thus we have $(E\cdot R_1)>0$. 
Hence $R_1$ is also of type $(n-1,n-2)^{\sm}$ by Proposition \ref{step1}. 
Let $\phi_1:=\cont_{R_1}\colon X\rightarrow Y_1$, 
let $F_1\subset X$ be the exceptional divisor of $\phi_1$ 
and let $W_1\subset Y_1$ be the (smooth) center of the blowing up $\phi_1$. 

We note that $(F_i\cdot R)>0$ by Claim \ref{step311}. Thus $(F_i\cdot R_1)<0$ holds 
since $(F_i\cdot R)>0$, $(F_i\cdot R')=0$ and $(F_i\cdot R_i)<0$. 
Hence we have $C_1\subset F_i$ 
for any nontrivial fiber $C_1\subset X$ of $\phi_1$. 
Hence $F_1=F_i$ holds. 
We have $\phi_i(C_1)\neq\text{pt}$ and $Y_i$ is a Fano manifold 
by Proposition \ref{step2}.Thus we have 
\[
0<(-K_{Y_i}\cdot {\phi_i}_*C_1)=(-K_X\cdot C_1)+(F_i\cdot C_1).
\]
However, $(F_i\cdot C_1)=-1$ holds since $F_1=F_i$, and $(-K_X\cdot C_1)=1$. 
Thus $(-K_{Y_i}\cdot {\phi_i}_*C_1)=0$ holds. 
This leads to a contradiction. 
\end{proof}

We have $m=2$ and $(E\cdot R_2)=0$ and $(E\cdot R_1)>0$ by Claim \ref{step312}. 
The rays $R_1$ and $R_2$ are of type $(n-1,n-2)^{\sm}$ by Remark \ref{rmkcase1}. 
Therefore the ray $R_2$ is exactly the extremal ray what we want. 
\end{proof}

\begin{proposition}\label{step32}
If all extremal rays different from $R$ are of type $(n-1,n-2)^{\sm}$, 
there exists an extremal ray $R_0\subset\NE(X)$ with $(E\cdot R_0)=0$. 
\end{proposition}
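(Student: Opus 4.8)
The plan is to produce the required ray as the elementary transform of a ray on which $E$ is positive. First recall the dichotomy of Proposition \ref{step1}: for an extremal ray $R_0\neq R$ one has $(E\cdot R_0)=0$ exactly when $E\cap\Exc(R_0)=\emptyset$. So it is enough to exhibit a single ray $R_0\neq R$ with $E\cap\Exc(R_0)=\emptyset$. By Remark \ref{rmkcase1} there is an extremal ray $R_1$ with $(E\cdot R_1)>0$, necessarily of type $(n-1,n-2)^{\sm}$ and with $E\cap\Exc(R_1)\neq\emptyset$. I would feed $R_1$ into Proposition \ref{step2}: writing $\phi_1:=\cont_{R_1}\colon X\to Y_1$, $F_1:=\Exc(R_1)$ and $W_1\subset Y_1$ for the smooth center, $Y_1$ is a Fano manifold carrying a $\pr^1$-bundle $\pi\colon Y_1\to Z$, and the elementary transform yields a second contraction $\phi'\colon X\to Y'$ of type $(n-1,n-2)^{\sm}$ contracting a ray $R_1'$ whose exceptional divisor is the strict transform of $\pi^{-1}(\pi(W_1))$. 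Since $R_1'$ is of type $(n-1,n-2)^{\sm}$ it differs from the $(n-1,0)$-ray $R$, so $R_1'$ is a legitimate candidate for $R_0$.

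Set $E_1:=\phi_1(E)$; from $E\cap F_1\neq\emptyset$ one gets $E_1\cap W_1\neq\emptyset$, and from $\dim\NC(E,X)=1$ one gets $\dim\NC(E_1,Y_1)=1$. The decisive step is the alternative $W_1\subset E_1$ or $W_1\not\subset E_1$, and I would rule out the latter. Let $E$ also denote the strict transform of $E_1$, so that $\phi_1|_E\colon E\to E_1$ is the induced birational morphism. If $W_1\not\subset E_1$, then at a general point $w$ of the nonempty proper intersection $E_1\cap W_1$ the tangent space $T_wE_1$ surjects onto the rank-two normal space $N_{W_1/Y_1,w}$, so the strict transform $E$ contains the whole exceptional fibre $\Phi_w\simeq\pr^1$ of $\phi_1$ over $w$. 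Such $\Phi_w$ is a nontrivial fibre of $\phi_1$, whence $[\Phi_w]\in R_1$, while $\Phi_w\subset E$ forces $[\Phi_w]\in\NC(E,X)=R$. This gives $R_1=R$, a contradiction. Therefore $W_1\subset E_1$.

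With $W_1\subset E_1$ in hand I would read off $(E\cdot R_1')=0$. Applying Proposition \ref{p1} to $Y_1$, using the $\pr^1$-bundle $\pi$ and the contraction of the ray spanned by the one-dimensional family $\NC(E_1,Y_1)$ (which sends the prime divisor $E_1$ to a point), identifies $Y_1\simeq\pr_Z(\sO_Z\oplus\sO_Z(s))$ with $E_1$ the section satisfying $\sN_{E_1/Y_1}\simeq\sO_Z(-s)$. Because $E_1$ is a section and $W_1\subset E_1$, the center is exactly $W_1=\pi|_{E_1}^{-1}(\pi(W_1))$, hence $E_1\cap\pi^{-1}(\pi(W_1))=W_1$ lies in the blow-up center. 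Consequently the strict transforms of $E_1$ and of $\pi^{-1}(\pi(W_1))$, that is $E$ and $\Exc(R_1')$, are disjoint, so $(E\cdot R_1')=0$ and $R_0:=R_1'$ is the desired ray.

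The step I expect to be the main obstacle is the local computation in the second paragraph: verifying that $W_1\not\subset E_1$ forces the full exceptional fibre $\Phi_w$ to lie on the strict transform $E$, rather than merely meeting it in a point. This surjectivity of $T_wE_1\to N_{W_1/Y_1,w}$ is precisely what separates it from the harmless case $W_1\subset E_1$, where $\phi_1|_E$ is an isomorphism and $E$ meets each fibre of $\phi_1$ in a single point; pinning this distinction down is the crux, after which the application of Propositions \ref{p1} and \ref{step2} is formal.
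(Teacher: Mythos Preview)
Your overall strategy matches the paper's: pick $R_1$ with $(E\cdot R_1)>0$, apply Proposition~\ref{step2} to obtain the $\pr^1$-bundle $\pi\colon Y_1\to Z$ and the elementary transform $\phi'$, then argue that $\Exc(\phi')\cap E=\emptyset$ by identifying $E_1$ as a section of $\pi$ containing $W_1$. The only point where your argument diverges is in the justification of $W_1\subset E_1$, which the paper simply records as a side remark.

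There is a genuine gap in that justification. Your claim that, at a general point $w$ of $E_1\cap W_1$, the map $T_wE_1\to N_{W_1/Y_1,w}$ is surjective, is not warranted: surjectivity is equivalent to $T_wW_1\not\subset T_wE_1$, and this can fail at every point of the intersection even when $W_1\not\subset E_1$. For instance, in $\mathbb{A}^3$ with $W_1=\{x=y=0\}$ and $E_1=\{y-z^2=0\}$ one has $W_1\not\subset E_1$, $W_1\cap E_1=\{0\}$, and $T_0W_1=\langle\partial_z\rangle\subset T_0E_1=\{y=0\}$, so the normal map has one-dimensional image. (In this example the strict transform still contains the full fibre, so your implication ``surjective $\Rightarrow$ $\Phi_w\subset E$'' is not what fails; it is the premise that is unjustified.)

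The repair is shorter than your original argument. If $W_1\not\subset E_1$, then the multiplicity of the Cartier divisor $E_1$ along $W_1$ is zero, hence $\phi_1^*E_1=E$ as divisors; for any nontrivial fibre $f$ of $\phi_1$ this gives $(E\cdot f)=(\phi_1^*E_1\cdot f)=0$, contradicting $(E\cdot R_1)>0$. Equivalently, $E=\phi_1^{-1}(E_1)$ set-theoretically, so $\Phi_w\subset E$ for any $w\in E_1\cap W_1$, which is exactly the contradiction you wanted without the tangent-space computation. With this replacement your proof is correct and essentially identical to the paper's.
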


\begin{proof}
We denote 
\[
\NE(X)=R+R_1+\cdots+R_m,
\]
where the set of $2$-dimensional extremal faces of $\NE(X)$ is $\{R+R_1, R_1+R_2,\cdots, R_{m-1}+R_m, R_m+R\}$. 
Let $\phi_i:=\cont_{R_i}\colon X\rightarrow Y_i$, 
let $F_i\subset X$ be the exceptional divisor of $\phi_i$ 
and let $W_i\subset Y_i$ be the (smooth) center of the blowing up $\phi_i$ 
for any $1\leq i\leq m$. 
We can assume that $(E\cdot R_1)>0$. 

Let $E_1:=\phi_1(E)\subset Y_1$. $Y_1$ is a Fano manifold 
having $\pr^1$-bundle structure By Proposition \ref{step2}. 
We consider the surjective map ${\phi_1}_*:\NE(X)\twoheadrightarrow\NE(Y_1)$. 
Let $R_{Y_1}$, ${R_2}_{Y_1}\subset\NE(Y_1)$ be the images of $R$, $R_2\subset\NE(X)$, respectively. 
Then the cone $\NE(Y_1)$ is spanned by $R_{Y_1}$, ${R_2}_{Y_1}$ 
and $\cont_{R_{Y_1}}(E_1)=\text{pt}$. 
Thus we have $Y_1\simeq\pr_Z(\sO_Z\oplus\sO_Z(s))\xrightarrow{\pi}Z$ such that 
$Z$ is a Fano manifold of $\rho_Z=1$ with index $r$ such that 
the ample generator of $\Pic(Z)$ is $\sO_Z(1)$ and $r>s\geq 0$ 
and $E_1$ is a section of $\pi$ (we note that $W_1\subset E_1$) by Proposition \ref{p1}. 
We consider the elementally transform of $\pi$ factoring $X$. 
More precisely, we consider the diagram 
\[\xymatrix{
& X \ar[dl]_{\phi'} \ar[dr]^{\phi_1} & \\
Y'\ar[dr]_{\pi'} & & Y_1 \ar[dl]^{\pi} \\
& Z & \\
}\]
with $Y'$ is a smooth projective variety, and $\pi'$ is a $\pr^1$-bundle 
and $\phi'$ is the blowing up (different from $\phi_0$) 
along a smooth subvariety of codimension $2$ 
such that the exceptional divisor is the strict transform of $\pi^{-1}\pi(W_1)$. 

There exists an extremal ray $R'\subset\NE(X)$ such that $(E\cdot R')=0$ 
since $\Exc(\phi')\cap E=\emptyset$. 
The ray $R'$ is exactly the extremal ray what we want. 
\end{proof}

As a consequence, we have completed the proof of Theorem \ref{mainthm}.

\medskip

\smallskip

\noindent K.\ Fujita

Research Institute for Mathematical Sciences (RIMS),
Kyoto University, 

Oiwake-cho, Kitashirakawa, Sakyo-ku, Kyoto 606-8502, Japan 

fujita@kurims.kyoto-u.ac.jp

\end{document}